\newtheorem{thm}{Theorem}[section]
\newtheorem{lem}[thm]{Lemma}
\theoremstyle{definition}
\newtheorem{rem}[thm]{Remark}
\newtheorem{eg}[thm]{Example}
\newtheorem{conj}[thm]{Conjecture}
\numberwithin{equation}{section}
\begin{document}

\baselineskip=17pt

\title[determinantal inequality]{On a determinantal inequality arising from diffusion tensor imaging}

\author[M. Lin]{Minghua Lin}
\address{Department of Mathematics \\
  Shanghai University\\ Shanghai, 200444, China}
\email{m\_lin@i.shu.edu.cn}

\date{}

\begin{abstract}  In comparing geodesics induced by different metrics, Audenaert formulated the following determinantal inequality 
	$$\det(A^2+|BA|)\le \det(A^2+AB),$$  
	where $A, B$ are $n\times n$ positive semidefinite matrices. We complement his result by proving 
	$$\det(A^2+|AB|)\ge \det(A^2+AB).$$  Our proofs feature the fruitful interplay between determinantal inequalities and majorization relations.  
	Some related questions are mentioned.  
  \end{abstract}

\subjclass[2010]{15A45, 15A60}

\keywords{determinantal inequality, positive semidefinite matrix, log-majorization.}

\maketitle

\section{Introduction}
 In the mathematical framework of interpolation methods for image processing in diffusion tensor imaging, one needs to 
 compare geodesics induced by different metrics. The following determinantal inequality,   formulated by Audenaert \cite{Aud15}, arises in this setting
 \begin{eqnarray}\label{e1} \det(A+U^*B)\le \det(A+B),\end{eqnarray} 
 where $A, B$ are positive semidefinite matrices and $U$ is a unitary matrix that appears in the polar decomposition of $BA$. For readers interested in how the determinantal inequality (\ref{e1})  comes into being, we refer to  \cite{Aud15} and references therein. In this article, we are mainly interested in determinantal inequalities that are inspired by (\ref{e1}). 
 
 If one does not want the ``specified" unitary matrix to come into the play, an equivalent formulation  of (\ref{e1}) is the following 
 \begin{thm}\label{thm1} Let  $A, B$ be $n\times n$  positive semidefinite matrices. Then \begin{eqnarray}\label{e2} \det(A^2+|BA|)\le \det(A^2+AB).\end{eqnarray}  
 \end{thm}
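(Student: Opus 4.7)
The plan is to translate the inequality into Audenaert's original form (\ref{e1}) via the polar decomposition of $BA$, and then complete the proof using a Thompson-type determinantal bound. By continuity (replace $A$ by $A+\varepsilon I$ and let $\varepsilon\downarrow 0$), assume $A$ is invertible. Let $BA=U|BA|$ be the polar decomposition with $U$ unitary, so that $|BA|=U^{*}BA$. Then one gets the parallel identities
$$A^{2}+|BA|=(A+U^{*}B)\,A, \qquad A^{2}+AB=A\,(A+B),$$
and taking determinants collapses \eqref{e2} to
$$\det(A+U^{*}B)\le\det(A+B),$$
which is exactly \eqref{e1}. A bonus of the left-hand identity is that $\det(A+U^{*}B)=\det(A^{2}+|BA|)/\det A\ge 0$, so it is enough to prove the a priori weaker statement $|\det(A+U^{*}B)|\le\det(A+B)$.

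To handle this reduced form, I would invoke a Thompson-type determinantal inequality: for any positive semidefinite $P$ and any $n\times n$ matrix $Y$,
$$|\det(P+Y)|\le\det(P+|Y|), \qquad |Y|:=(Y^{*}Y)^{1/2}.$$
Applied with $P:=A$ and $Y:=U^{*}B$, one has $|Y|^{2}=B^{*}UU^{*}B=B^{2}$, hence $|U^{*}B|=B$, and the Thompson-type bound specializes to exactly $|\det(A+U^{*}B)|\le\det(A+B)$. Combined with the sign remark in the first paragraph, this yields \eqref{e2}.

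The main obstacle is proving the Thompson-type inequality itself, and this is the place I expect the promised ``fruitful interplay between determinantal inequalities and majorization relations'' to enter. The natural strengthening to aim at is the weak log-majorization
$$\prod_{i=1}^{k}\sigma_{i}(P+Y)\le\prod_{i=1}^{k}\sigma_{i}(P+|Y|),\qquad 1\le k\le n,$$
whose $k=n$ case, together with $P+|Y|$ being positive semidefinite (so $\prod_{i}\sigma_{i}(P+|Y|)=\det(P+|Y|)$) and Weyl's $|\det(P+Y)|=\prod_{i}\sigma_{i}(P+Y)$, gives the desired determinantal bound. I would try to derive this log-majorization by writing $Y=V|Y|$ for the polar factor $V$ and combining Horn's singular-value log-majorization with Thompson's triangle inequality $|X+Z|\le W_{1}|X|W_{1}^{*}+W_{2}|Z|W_{2}^{*}$. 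If a clean proof of this log-majorization does not emerge, the fallback is to simply cite Audenaert's proof of \eqref{e1}, whereupon the polar-decomposition identity in the first paragraph is already the entire argument.
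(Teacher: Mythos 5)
Your opening reduction is fine and is exactly the equivalence the paper itself points out: writing $BA=U|BA|$ gives $A^2+|BA|=(A+U^*B)A$ and $A^2+AB=A(A+B)$, so for invertible $A$ the claim is precisely Audenaert's inequality \eqref{e1}. The fatal problem is the ``Thompson-type'' lemma you then invoke: the inequality $|\det(P+Y)|\le\det(P+|Y|)$ for positive semidefinite $P$ and arbitrary $Y$ is \emph{false}. Take
$P=\begin{bmatrix}1&0\\0&0\end{bmatrix}$ and $Y=\begin{bmatrix}0&0\\ \sqrt2&\sqrt2\end{bmatrix}$; then $|Y|=(Y^*Y)^{1/2}=\begin{bmatrix}1&1\\1&1\end{bmatrix}$, so $|\det(P+Y)|=\sqrt2$ while $\det(P+|Y|)=\det\begin{bmatrix}2&1\\1&1\end{bmatrix}=1$. (Here $Y=VQ$ with $V$ a $45^\circ$ rotation and $Q=|Y|$, so equivalently $|\det(A+WB)|\le\det(A+B)$ fails for a general unitary $W$.) This shows your strategy discards the essential information: Audenaert's inequality holds only for the \emph{specific} unitary coming from the polar decomposition of $BA$, which couples $U$ to both $A$ and $B$; your lemma would have to hold for every matrix $Y$ with $|Y|=B$, and it does not. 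The intermediate weak log-majorization $\sigma(P+Y)\prec_{w\log}\sigma(P+|Y|)$ you propose as a stepping stone also fails already at $k=1$: with $P=\mathrm{diag}(1,0)$ and $Y$ the matrix unit $E_{12}$ one gets $\sigma_1(P+Y)=\sqrt2>1=\sigma_1(P+|Y|)$.

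Finally, the stated fallback --- ``cite Audenaert's proof of \eqref{e1}'' --- is not a proof, since \eqref{e1} \emph{is} the statement being proved. The paper's actual argument (Theorem \ref{thm3} with $p=1$) runs through the log-majorization $\lambda(A\sharp_t B)\prec_{\log}\lambda(A^{1-t}B^t)$ for the weighted geometric mean, applies principle ({\bf P2}) to get $\det(I+A\sharp_t B)\le\det(I+A^{1-t}B^t)$, and then substitutes $A\to A^{-2}$, $B\to B^2$, $t\to 1/2$ and multiplies through by $\det A$. If you want a self-contained proof, that is the missing ingredient you need to supply; the polar-decomposition reduction alone does not get you there.
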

 Here, for a complex matrix $X$, the absolute value of $X$ is defined as $|X|=(X^*X)^{1/2}$, the unique positive semidefinite square root of $X^*X$. 
 
 We complement  Theorem \ref{thm1} by proving 
 \begin{thm}\label{thm2} Let  $A, B$ be $n\times n$  positive semidefinite matrices. Then \begin{eqnarray}\label{e3} \det(A^2+|AB|)\ge \det(A^2+AB).\end{eqnarray}  
 \end{thm}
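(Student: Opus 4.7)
My plan is to derive Theorem~\ref{thm2} from Theorem~\ref{thm1} by pairing it with a determinantal Cauchy--Schwarz inequality extracted from a $2\times 2$ block matrix. Theorem~\ref{thm1} already supplies an \emph{upper} bound for $\det(A^2+|BA|)$; the idea is to produce a matching \emph{lower} bound for the product $\det(A^2+|BA|)\cdot\det(A^2+|AB|)$ in terms of $\det(A^2+AB)^2$, after which simple division yields Theorem~\ref{thm2}.

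The crux is to verify that the Hermitian block matrix
$$
M \;:=\; \begin{pmatrix} A^2+|BA| & A^2+AB \\ A^2+BA & A^2+|AB| \end{pmatrix}
$$
is positive semidefinite. Decompose $M = \bigl(\begin{smallmatrix} A\\ A\end{smallmatrix}\bigr)\bigl(\begin{smallmatrix} A & A\end{smallmatrix}\bigr) + N$, where $N := \bigl(\begin{smallmatrix} |BA| & AB \\ BA & |AB|\end{smallmatrix}\bigr)$. The first summand is of the form $XX^*$, hence PSD, so it suffices to show $N \succeq 0$. For this I would invoke the polar decomposition $AB = U|AB|$, which gives the identity $|BA| = |(AB)^*| = U|AB|U^*$. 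A direct computation then shows that for all vectors $x,y$,
$$
\langle x,|BA|x\rangle + 2\,\mathrm{Re}\langle x, ABy\rangle + \langle y,|AB|y\rangle \;=\; \bigl\|\, |AB|^{1/2}(U^*x+y)\,\bigr\|^2 \;\ge\; 0,
$$
so the quadratic form of $N$ is a sum of squares.

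Once $M \succeq 0$ is established, the Hadamard--Fischer inequality $\det(P)\det(R) \ge |\det(Q)|^2$ for a PSD block matrix $\bigl(\begin{smallmatrix} P & Q \\ Q^* & R\end{smallmatrix}\bigr)$ applied to $M$ yields
$$
\det(A^2+|BA|)\,\det(A^2+|AB|) \;\ge\; \bigl|\det(A^2+AB)\bigr|^2 \;=\; \det(A^2+AB)^2,
$$
where the last equality uses that $\det(A^2+AB) = \det(A)\det(A+B) \ge 0$. Combining with Theorem~\ref{thm1} (and treating the degenerate case $\det(A^2+|BA|)=0$ separately, where the desired inequality becomes trivial) gives
$$
\det(A^2+|AB|) \;\ge\; \frac{\det(A^2+AB)^2}{\det(A^2+|BA|)} \;\ge\; \det(A^2+AB),
$$
as required.

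The principal obstacle is locating the correct block matrix $M$ and verifying $N \succeq 0$; the remainder is bookkeeping, chaining together the polar decomposition, Hadamard--Fischer, and the already-known Theorem~\ref{thm1}. What makes the verification succeed is the particular placement of $|BA|$ and $|AB|$ on the diagonal (opposite to $AB$ and $BA$), which is exactly what causes the cross term $2\,\mathrm{Re}\langle x,ABy\rangle$ to combine with the diagonal contributions into the perfect square $\bigl\||AB|^{1/2}(U^*x+y)\bigr\|^2$.
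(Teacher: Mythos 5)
Your proof is correct, and it takes a genuinely different route from the paper. The paper proves Theorem \ref{thm2} from scratch: it introduces the mean $A\natural B=A^{1/2}(B^{1/2}A^{-1}B^{1/2})^{1/2}A^{1/2}$, establishes the log-majorization $\lambda(A\natural B)\succ_{\log}\lambda(A^{1/2}B^{1/2})$ (via antisymmetric powers, a Schur-complement norm inequality, and the known relation (\ref{e22a})), and then applies ({\bf P2}) with the substitution $A\mapsto A^{-2}$, $B\mapsto B^2$. You instead deduce Theorem \ref{thm2} as a formal consequence of Theorem \ref{thm1}: your block matrix $M$ is indeed positive semidefinite --- since $|BA|=U|AB|U^*$ for the polar factor $U$ of $AB$, the quadratic form of $N$ collapses to the perfect square $(U^*x+y)^*|AB|(U^*x+y)$ --- and the standard determinant inequality $\det P\det R\ge|\det Q|^2$ for a positive semidefinite block matrix (a determinantal Cauchy--Schwarz; the name ``Hadamard--Fischer'' usually refers to the other-direction bound $\det M\le\det P\det R$, but the inequality you actually invoke is correct) yields $\det(A^2+|BA|)\,\det(A^2+|AB|)\ge\det(A^2+AB)^2$, after which division using Theorem \ref{thm1} finishes the argument, with the degenerate cases handled exactly as you indicate. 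What the paper's route buys is independence from Audenaert's Theorem \ref{thm1} and a majorization lemma (Lemma \ref{lem1}) of standalone interest that motivates the weighted means and conjectures of Section \ref{s4}; what your route buys is elementarity (no majorization machinery at all) plus the stronger intermediate statement that $\det(A^2+AB)$ is at most the geometric mean of $\det(A^2+|BA|)$ and $\det(A^2+|AB|)$, which in combination with Theorem \ref{thm1} also recovers the inequality $\det(A^2+|AB|)\ge\det(A^2+|BA|)$ noted near the end of the paper.
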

 
 The paper is organized as follows. In the next section, we review Audenaert's proof of Theorem \ref{thm1}, with a special attention  paid to the fruitful interplay between determinantal inequalities and majorization relations (to be introduced). In Section \ref{s3}, we present a proof of Theorem \ref{thm2}. We conclude with some remarks and open problems in Section \ref{s4}. 
 
 \section{Preliminaries}\label{s2}
 For a vector $x\in\mathbb{R}^n$, we denote by $x^{\downarrow}=(x_1^{\downarrow}, \ldots, x_n^{\downarrow})\in\mathbb{R}^n$ the vector with the same components as $x$, but sorted in descending order. Given $x, y\in \mathbb{R}^n$, we say that $x$  weakly majorizes $y$,  written as $x\succ_w y$, if
 $$\sum_{i=1}^k x_i^{\downarrow} \geq \sum_{i=1}^k y_i^{\downarrow} \quad \text{for } k=1,\dots, n.$$ We say  $x$   majorizes $y$, denoted by $x\succ y$, if $x\succ_w y$  and the sum of all elements of $x$ equal to the sum of all elements of $y$.
 Now given $x, y\in \mathbb{R}^n_+$, we say that $x$  weakly log-majorizes $y$,  written as $x\succ_{w\log} y$, if
 $$\prod_{i=1}^k x_i^{\downarrow} \geq \prod_{i=1}^k y_i^{\downarrow} \quad \text{for } k=1,\dots, n.$$ We say  $x$   log-majorizes $y$, denoted by $x\succ_{\log} y$, if $x\succ_{w\log} y$  and the product of all elements of $x$ equal to the product of all elements of $y$. For an $n\times n$ matrix $A$ with all eigenvalues real, we denote the vector of eigenvalues  by
 $\lambda(A) =(\lambda_1(A),  \ldots,  \lambda_n(A))$, and we assume that the components of $\lambda(A)$ are in descending order. The eigenvalues of $|A|$ are called the singular values of $A$.  A concise treatment of majorization relations for eigenvalues or singular values can be found in \cite[Chapter II]{Bha97},  \cite[Chapter 3]{Zhan13}   and \cite[Chapter 10]{Zha11}.

 It has been long known  that majorization is a powerful tool in establishing determinantal inequalities; see \cite{Mar65},  \cite[p. 183]{Bha97} and for recent examples see \cite{DDL15, Lin14, Lin15}. Moreover, many classical determinantal inequalities  (for example, the Hadamard-Fischer inequality, the Oppenheim inequality) can find their majorization counterparts \cite {BS85}.  The following are two  prototypes that we apply majorization relations to derive determinantal inequalites. Assume that $X$ and $Y$ are $n\times n$ matrices:

 \noindent ({\bf P1}) \  If $\lambda(X), \lambda(Y)\in \mathbb{R}^n_+$ such that $\lambda(X)\succ\lambda(Y)$, then $\det X\le \det Y$.
 
 \noindent ({\bf P2}) \ If $\lambda(X), \lambda(Y)\in \mathbb{R}^n_+$ such that $\lambda(X)\succ_{w\log}\lambda(Y)$, then $$\det (I+X)\ge \det (I+Y).$$
 The proof of ({\bf P1}) makes use of the convexity of the function $f(x)=-\log x$ and \cite[Theorem 10.13]{Zha11}, while the proof of  ({\bf P2}) makes use of the convexity and monotonicity of the function $f(x)=\log (1+e^x)$ and \cite[Theorem 10.14]{Zha11}.
 
 To review Audenaert's proof of  Theorem \ref{thm1}, we present a slightly more general result. Such a generalization also motivates our thinking in Section \ref{s4}.

 \begin{thm}\label{thm3} Let $A$ and $B$ be $n\times n$ positive semidefinite matrices.   Then 
 	\begin{eqnarray} \label{e21} \det(A^2 + |BA|^p)\le \det(A^2+A^pB^p),  \qquad 0\le p\le 2.
 	\end{eqnarray}
 \end{thm}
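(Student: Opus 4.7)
My plan is to reduce the inequality (\ref{e21}) to a log-majorization and then attack that log-majorization via the Araki--Lieb--Thirring inequality. By continuity I may assume $A$ is invertible. The first observation is the similarity
\[ A^{-p/2}(A^2 + A^pB^p)A^{p/2} = A^2 + A^{p/2}B^pA^{p/2}, \]
which yields $\det(A^2+A^pB^p) = \det(A^2+A^{p/2}B^pA^{p/2})$; combined with $|BA|^p = (AB^2A)^{p/2}$, the inequality (\ref{e21}) becomes a comparison of determinants of two Hermitian positive semidefinite matrices.

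Next, factoring out $A^2$ via $\det(A^2+M) = \det(A)^2\det(I+A^{-1}MA^{-1})$ and cancelling $\det(A)^2$, the inequality reduces to $\det(I + A^{-1}(AB^2A)^{p/2}A^{-1}) \le \det(I + A^{p/2-1}B^pA^{p/2-1})$. Invoking $({\bf P2})$, it therefore suffices to establish the log-majorization
\[ \lambda(A^{p/2-1}B^pA^{p/2-1}) \succ_{w\log} \lambda\bigl(A^{-1}(AB^2A)^{p/2}A^{-1}\bigr). \qquad(\star) \]
Both sides are nonnegative real vectors whose products both equal $\det(A)^{p-2}\det(B)^p$, so the weak log-majorization $(\star)$ is in fact a genuine log-majorization.

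The crux is $(\star)$. I would reformulate it via singular values: setting $T_1 = B^{p/2}A^{p/2-1}$ and $T_2 = (AB^2A)^{p/4}A^{-1}$, the eigenvalue vectors in $(\star)$ equal $s^2(T_1)$ and $s^2(T_2)$ respectively, reducing $(\star)$ to $s(T_1) \succ_{w\log} s(T_2)$. Now Araki--Lieb--Thirring gives $\lambda(A^{p/2}B^pA^{p/2}) \prec_{\log} \lambda((AB^2A)^{p/2})$ for $p \in [0,2]$, which runs in the opposite direction at the level of the ``inner'' PSD matrices; so $(\star)$ must arise from conjugation by $A^{-1}$ reversing this log-majorization for our specific matrices.

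The main obstacle is proving this reversal, since conjugation by $A^{-1}$ does not preserve log-majorization in general. One has to exploit the specific interaction of $A$ with $B$ in both $T_1$ and $T_2$; I expect the rigorous argument to combine Araki--Lieb--Thirring with Horn's singular value inequality $s(XY)\prec_{w\log} s(X)\cdot s(Y)$, or with the antisymmetric tensor power characterization of log-majorization. The endpoints $p=0$ and $p=2$ make both sides of $(\star)$ coincide, providing a consistency check and suggesting a possible monotonicity or interpolation argument in $p$.
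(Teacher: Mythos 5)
Your reduction is sound and in fact lands exactly where the paper's proof does: after the similarity trick and factoring out $\det(A)^2$, everything hinges on the log-majorization you label $(\star)$,
$$\lambda\bigl(A^{-1}(AB^2A)^{p/2}A^{-1}\bigr)\prec_{\log}\lambda\bigl(A^{p/2-1}B^pA^{p/2-1}\bigr)=\lambda\bigl(A^{p-2}B^p\bigr).$$
But $(\star)$ is precisely the heart of the matter, and you leave it unproven, offering only candidate tools. That is a genuine gap. Moreover, the tools you name do not suffice as stated: Araki--Lieb--Thirring runs in the wrong direction (as you observe), and Horn's inequality $s(XY)\prec_{w\log}s(X)\cdot s(Y)$ only gives upper bounds on $s(T_2)$ by products of singular values, which cannot in general be compared to $s(T_1)$; conjugation by $A^{-1}$ does not ``reverse'' a log-majorization by any general principle, so some structural input about the two specific matrices is unavoidable.

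The missing ingredient is that $(\star)$ is an instance of a known complement of Araki--Lieb--Thirring for the weighted geometric mean, namely $\lambda(A\sharp_t B)\prec_{\log}\lambda(A^{1-t}B^t)$ for $0\le t\le 1$, where $A\sharp_t B=A^{1/2}(A^{-1/2}BA^{-1/2})^tA^{1/2}$ (inequality (\ref{e22a}) in the paper, due to Ando--Hiai/Bhatia--Grover). Substituting $A\mapsto A^{-2}$, $B\mapsto B^2$, $t=p/2$ turns $A\sharp_t B$ into $A^{-1}(AB^2A)^{p/2}A^{-1}$ and $A^{1-t}B^t$ into $A^{p-2}B^p$, which is exactly $(\star)$; the paper then concludes via ({\bf P2}) just as you propose. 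If you wanted to prove $(\star)$ from scratch rather than cite it, the antisymmetric tensor power reduction you mention is indeed the right device --- it reduces the claim to the single top-eigenvalue inequality $\lambda_1(A\sharp_t B)\le\lambda_1(A^{1-t}B^t)$, which can be checked by normalizing $\lambda_1(A^{1-t}B^t)\le 1$ and using operator monotonicity of $x\mapsto x^t$ --- but that argument still has to be supplied; it does not follow from Araki--Lieb--Thirring plus Horn alone.
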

 \begin{rem}\label{rem1}
 	As $\text{trace}(A^2 + |BA|^p)\ne  \text{trace}(A^2+A^pB^p)$ in general, it is less likely that ({\bf P1}) would apply. We therefore try to apply ({\bf P2}).  
 \end{rem}
 
 \noindent {\it Proof of Theorem \ref{thm3}.} We may assume without loss of generality that $A$ is positive definite by a standard perturbation argument. The following majorization relation is known (e.g.,  \cite[Eq. (17)]{BG12})
 \begin{eqnarray}\label{e22a} \lambda(A\sharp_t B)\prec_{\log}\lambda(A^{1-t}B^t), \qquad  0\le t\le 1, 
 \end{eqnarray}  where $A\sharp_t B:=A^{1/2}(A^{-1/2}BA^{-1/2})^tA^{1/2}$.  Thus by ({\bf P2}) we get
 \begin{eqnarray}\label{e22} \det(I+A^{1/2}(A^{-1/2}BA^{-1/2})^tA^{1/2})\le \det(I+A^{1-t}B^t).
 \end{eqnarray}
 Replacing $A$ with $A^{-2}$, $B$ with $B^2$ and $t$ with $p/2$, respectively,  in (\ref{e22}) yields 
 \begin{eqnarray*}  \det(I+A^{-1}(AB^2A)^{p/2}A^{-1})&\le& \det(I+A^{p-2}B^p)\\&=&  \det(I+A^{p-1}B^pA^{-1}).
 \end{eqnarray*}
 Pre-post multiplying both sides by $\det A$ yields the desired result.
 \qed
 
 The quantity $A\sharp_t B$ is sometimes called the weighted geometric mean of $A$ and $B$. In the sequal, if $t=1/2$, we simply put $A\sharp B$ for $A\sharp_{\frac{1}{2}}B$.  
 
 The next example shows that  (\ref{e21}) may not be valid for $p>2$. 
 
 \begin{eg}Take two positive definite matrices \begin{eqnarray*} A=\begin{bmatrix} 1 &  1  \\ 1  &  2\end{bmatrix}, \quad  B=\begin{bmatrix} 2 &  -2  \\ -2  &  3\end{bmatrix}.
 	\end{eqnarray*} A simple calculation gives $\det(A^2 + |AB|^3)=100>\det(A^2+A^3B^3)=71$.\end{eg}

 \section{Proof of Theorem \ref{thm2}}\label{s3}
 Based on a reason similar to Remark \ref{rem1}, we would apply ({\bf P2}) for our purpose. Define $$A \natural B:=A^{1/2}(B^{1/2}A^{-1}B^{1/2})^{1/2}A^{1/2}$$ for positive definite matrices $A$ and $B$. Clearly, $A\natural B$ is positive definite. We will establish the following majorization relation. 
 \begin{lem}\label{lem1} Let $A, B$ be $n\times n$ positive definite matrices. Then \begin{eqnarray*} \lambda(A \natural B)\succ_{\log}\lambda(A^{1/2}B^{1/2}).\end{eqnarray*} \end{lem}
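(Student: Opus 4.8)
The plan is to reduce the log-majorization to its $k=1$ instance by passing to antisymmetric tensor powers. Let $C_k(X)$ denote the $k$-th compound of $X$. Being a multiplicative homomorphism that preserves positive definiteness, $C_k$ satisfies $C_k(X^{1/2})=C_k(X)^{1/2}$ and $C_k(X^{-1})=C_k(X)^{-1}$ for positive definite $X$; consequently
\[ C_k(A\natural B)=C_k(A)\natural C_k(B),\qquad C_k(A^{1/2}B^{1/2})=C_k(A)^{1/2}C_k(B)^{1/2}. \]
Since $A\natural B$ is positive definite and $A^{1/2}B^{1/2}$ has positive spectrum, the largest eigenvalue of each compound equals the product of the $k$ largest eigenvalues of the underlying matrix; thus the partial-product inequality $\prod_{i=1}^{k}\lambda_i(A\natural B)\ge\prod_{i=1}^{k}\lambda_i(A^{1/2}B^{1/2})$ is nothing but the $k=1$ inequality applied to $(C_k(A),C_k(B))$. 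A direct computation gives $\det(A\natural B)=(\det A\,\det B)^{1/2}=\det(A^{1/2}B^{1/2})$, so equality holds at $k=n$ and the weak log-majorization is in fact log-majorization. Everything therefore rests on the single spectral inequality
\[ \lambda_1(A^{1/2}B^{1/2})\le\lambda_1(A\natural B)\qquad\text{for all positive definite }A,B. \]

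I would prove this variationally. Here $\lambda_1(A^{1/2}B^{1/2})$ is the spectral radius $\mu$ of $A^{1/2}B^{1/2}$; let $v$ be a Perron eigenvector, so $A^{1/2}B^{1/2}v=\mu v$, equivalently $B^{1/2}v=\mu A^{-1/2}v$. Writing $A\natural B=A^{1/2}|W|A^{1/2}$ with $W=A^{-1/2}B^{1/2}$ and $|W|=(B^{1/2}A^{-1}B^{1/2})^{1/2}$, the Rayleigh quotient of $A\natural B$ at the trial vector $A^{-1/2}v$ equals $v^{*}|W|v/(v^{*}A^{-1}v)$, so it suffices to prove $v^{*}|W|v\ge\mu\,v^{*}A^{-1}v$. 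Introducing the polar decomposition $W=U|W|$ turns this into an identity-plus-inequality: from $Wv=\mu A^{-1}v$ we get $|W|v=\mu U^{*}A^{-1}v$, so the claim becomes
\[ \operatorname{Re}\langle v,U^{*}A^{-1}v\rangle\ge\langle v,A^{-1}v\rangle. \]

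The main obstacle is exactly this inequality, that is, controlling the unitary polar factor $U$ of $A^{-1/2}B^{1/2}$. It cannot be obtained from size estimates alone: for a general unitary $U$ and positive definite $P,Q$ one has $\max_{U}\rho(UPQ)=s_1(PQ)>\lambda_1(PQ)$, so the bound $\rho(UPQ)\le\lambda_1(PQ)$ genuinely fails, and the proof must use the defining positivity constraint that $B^{1/2}=A^{1/2}U|W|$ is positive semidefinite, i.e. that $A^{1/2}U|W|=|W|U^{*}A^{1/2}$ is Hermitian. My plan is to combine this Hermitian relation with the concavity of $t\mapsto t^{1/2}$: the naive Jensen and Cauchy--Schwarz bounds only yield $v^{*}|W|v\ge\mu\,(v^{*}A^{2}v)^{-1/2}$, which is too weak, so I would instead exploit the integral representation $|W|=\tfrac1\pi\int_0^{\infty}|W|(|W|+sI)^{-1}s^{-1/2}\,ds$ to feed the Hermitian constraint into the estimate and recover the sharp lower bound. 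Alternatively one may aim at the stronger statement $\lambda(A\natural B)\succ_{\log}s(A^{1/2}B^{1/2})$ and finish by Weyl's majorant theorem $s(A^{1/2}B^{1/2})\succ_{\log}\lambda(A^{1/2}B^{1/2})$, which reduces the $k=1$ step to $\lambda_1(A\natural B)\ge\|A^{1/2}B^{1/2}\|$. Once the scalar inequality is secured, the compound-matrix reduction of the first paragraph yields the lemma.
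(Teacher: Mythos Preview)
Your reduction via compound matrices and the determinant check at $k=n$ are correct and coincide with the paper's first step. The gap is that the remaining scalar inequality
\[
\lambda_1(A\natural B)\ \ge\ \lambda_1(A^{1/2}B^{1/2})
\]
is never actually established. Your variational attempt reduces it to $\operatorname{Re}\langle v,U^{*}A^{-1}v\rangle\ge\langle v,A^{-1}v\rangle$, and you yourself flag this as ``the main obstacle''; the proposed fixes (feeding the Hermitian constraint through an integral representation of $t^{1/2}$, or switching to the singular-value version) are only outlined, not carried out. Note also that the Rayleigh-quotient route is a sufficient but not necessary condition: even if the lemma holds, there is no guarantee that the particular trial vector $A^{-1/2}v$ already captures $\lambda_1(A\natural B)$, so the displayed inequality about $U^{*}$ may simply be false for some $A,B$ without contradicting the lemma. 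In short, the proposal stops exactly at the point where the real work begins.

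The paper's argument for the scalar step is quite different and avoids the polar factor entirely. The key is the algebraic identity
\[
A\natural B \;=\; A^{1/2}B^{1/2}\,(A\sharp B)^{-1}\,B^{1/2}A^{1/2},
\]
which one checks using $A\sharp B=B^{1/2}(B^{-1/2}AB^{-1/2})^{1/2}B^{1/2}$. With $X=A^{1/2}B^{1/2}$ and $H=A\sharp B$ this reads $A\natural B=XH^{-1}X^{*}$; the positive semidefiniteness of $\begin{bmatrix} H & X^{*}\\ X & XH^{-1}X^{*}\end{bmatrix}$ then yields $\|X\|^{2}\le\|H\|\,\|XH^{-1}X^{*}\|$, i.e.
\[
\|A^{1/2}B^{1/2}\|^{2}\ \le\ \|A\sharp B\|\,\|A\natural B\|.
\]
Finally, the known log-majorization $\lambda(A\sharp B)\prec_{\log}\lambda(A^{1/2}B^{1/2})$ gives $\|A\sharp B\|\le\|A^{1/2}B^{1/2}\|$, so $\|A\natural B\|\ge\|A^{1/2}B^{1/2}\|$, which is precisely the ``stronger statement'' you mention as an alternative but do not prove.
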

 \begin{proof} First of all, using a standard argument via the anti-symmetric product (see, e.g., \cite[p.18]{Bha97}), it suffices to show 
 	\begin{eqnarray*} \lambda_1(A \natural B)\ge\lambda_1(A^{1/2}B^{1/2}),\end{eqnarray*}  
 	which would follow from the spectral norm inequality
 	\begin{eqnarray}\label{e31} \|A \natural B\|\ge \|A^{1/2}B^{1/2}\|.\end{eqnarray} 
 	
 	Using the Schur complement \cite[p. 227]{Zha11}, it is easy to see that if an $n\times n$ matrix $H$ is positive definite, then  $\begin{bmatrix} H &  X^*  \\ X   &  XH^{-1}X^*\end{bmatrix}$ is positive semidefinite for any $m\times n$ matrix $X$. 
 	
 	This implies  (see \cite[p. 352]{Zha11})
 	\begin{eqnarray}\label{e32} \|X\|^2\le \|H\|\| XH^{-1}X^*\|. \end{eqnarray}  
 	Observe that $$A \natural B=A^{1/2}B^{1/2}(A\sharp B)^{-1}B^{1/2}A^{1/2}.$$  Now letting $X=A^{1/2}B^{1/2}$, $H=A\sharp B$ in (\ref{e32}) yields 
 	\begin{eqnarray*} \|A^{1/2}B^{1/2}\|^2\le \|A\sharp B\|\|A \natural B\|. \end{eqnarray*} 
 	The required inequality (\ref{e31}) follows by noting  \begin{eqnarray*} \|A\sharp B\|\le \lambda_1(A^{1/2}B^{1/2})\le \|A^{1/2}B^{1/2}\| \end{eqnarray*}  from (\ref{e22a}). 	
 \end{proof}
 
 Now we are ready to present 
 
 \noindent {\it Proof of Theorem \ref{thm2}.} We may assume without loss of generality that $A, B$ are positive definite by a standard perturbation argument.    Thus by Lemma \ref{lem1} and ({\bf P2}) we get
 \begin{eqnarray}\label{e33} \det(I+A^{1/2}(B^{1/2}A^{-1}B^{1/2})^{1/2}A^{1/2})\ge \det(I+A^{1/2}B^{1/2}).
 \end{eqnarray}
 Replacing $A$ with $A^{-2}$, $B$ with $B^2$, respectively,  in (\ref{e33}) yields 
 \begin{eqnarray*}  \det(I+A^{-1}(BA^2B)^{1/2}A^{-1})\ge \det(I+A^{-1}B).
 \end{eqnarray*}
 Pre-post multiplying both sides by $\det A$ leads to the desired result.
 \qed
 
 \section{Remarks and Open Problems}\label{s4}
 We can get a complement of Theorem \ref{thm3} when $p=2$.
 \begin{thm}\label{thm4} Let $A$ and $B$ be $n\times n$ positive semidefinite matrices.   Then  \begin{eqnarray}\label{e41} \det(A^2 + |AB|^2)\ge \det(A^2+A^2B^2).
 	\end{eqnarray}
 \end{thm}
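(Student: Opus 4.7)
The plan is to recast the inequality in the form $\det(A^2)\det(I+X)$ on both sides and then invoke principle $({\bf P2})$, following the strategy of Theorems \ref{thm1} and \ref{thm2}. Since $A, B$ are Hermitian, $|AB|^2 = (AB)^*(AB) = BA^2B$, so (\ref{e41}) will be recast as $\det(A^2 + BA^2B) \ge \det(A^2 + A^2B^2)$. A standard perturbation will let us assume $A$ is positive definite.

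The key factorization I would use is
$$A^2 + BA^2B \;=\; A\bigl(I + A^{-1}BA \cdot ABA^{-1}\bigr)A \;=\; A(I + CC^*)A,$$
where $C := A^{-1}BA$ and $C^* = ABA^{-1}$ by Hermiticity of $A$ and $B$. Since $A^2 + A^2B^2 = A^2(I + B^2)$ as well, taking determinants and cancelling the common factor $\det(A^2)$ reduces the theorem to
$$\det(I + CC^*) \;\ge\; \det(I + B^2).$$

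To handle this, observe that $C = A^{-1}BA$ is similar to $B$, and hence has eigenvalues $\lambda(B)$, all non-negative. Weyl's classical majorization between singular values and the moduli of eigenvalues (see \cite[Chapter II]{Bha97}) will then deliver
$$s(C) \;\succ_{w\log}\; |\lambda(C)| \;=\; \lambda(B),$$
and since squaring preserves weak log-majorization on non-negative vectors, $\lambda(CC^*) = s(C)^2 \succ_{w\log} \lambda(B)^2 = \lambda(B^2)$. Applying $({\bf P2})$ with $X = CC^*$ and $Y = B^2$ will then finish the proof.

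The only nontrivial step is spotting that the factorization conjures up the similarity $C = A^{-1}BA$; once this is recognized, Weyl's majorization does the rest effortlessly. Note also the contrast with Theorem \ref{thm2}, where the corresponding lemma required the bespoke mean $A \natural B$ and a Schur-complement estimate; here, at $p=2$, the cruder Weyl bound suffices because the factor $A$ can be absorbed as a similarity rather than having to be symmetrized.
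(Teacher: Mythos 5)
Your argument is correct and is essentially the paper's own proof: after cancelling $\det(A^2)$ your matrix $CC^*$ with $C=A^{-1}BA$ is exactly the paper's $|ABA^{-1}|^2$, and both proofs then combine Weyl's majorant theorem (singular values log-majorize moduli of eigenvalues of the similarity-conjugate of $B$) with $({\bf P2})$. No substantive difference.
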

 \begin{proof} Again, we assume $A$ is positive definite.  After dividing both sides by $\det A^2$, the claimed inequality reduces to  \begin{eqnarray*}\det(I+ |ABA^{-1}|^2)\ge \det(I+B^2).
 	\end{eqnarray*}  By ({\bf P2}), it suffices to observe the following majorization relation  $$\lambda(|ABA^{-1}|)\succ_{\log}\lambda(ABA^{-1})=\lambda(B),$$   which is ensured by a result of Weyl (see \cite[p. 353]{Zha11}).
 \end{proof}
 Clearly,   (\ref{e41}) could be written as
 \begin{eqnarray*}\det(A^2 + |AB|^2)\ge \det(A^2 + |BA|^2).
 \end{eqnarray*}
 On the other hand, in \cite[Corollary 2.3]{LW12} the authors observed that for any positive integer $k$, it holds
 \begin{eqnarray*}\lambda(A^2+|BA|^{2k})\succ\lambda(A^2 + |AB|^{2k}).
 \end{eqnarray*}
 Thus by  ({\bf P1}), it follows  \begin{eqnarray} \label{e42} \det(A^2 + |AB|^{2k})\ge \det(A^2+|BA|^{2k}).
 \end{eqnarray}
 This says  (\ref{e41}) could be proved using either  ({\bf P1}) or  ({\bf P2}). 
 
 As an analogue of  Theorem \ref{thm3} and with the evidence of Theorem \ref{thm2} as well as Theorem \ref{thm4}, we make the following conjecture. 
 \begin{conj}
 	Let $A$ and $B$ be $n\times n$ positive semidefinite matrices.   Then 
 	\begin{eqnarray} \label{e43} \det(A^2 + |AB|^p)\ge \det(A^2+A^pB^p),  \qquad 0\le p\le 2.
 	\end{eqnarray}
 \end{conj}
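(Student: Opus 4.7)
The plan is to follow the template of Theorems~\ref{thm2} and \ref{thm3}: reduce the conjecture to a suitable log-majorization and then apply ({\bf P2}) followed by pre- and post-multiplication by $A$. Setting $F_t(A,B):=A^{1/2}(B^{1/2}A^{-1}B^{1/2})^t A^{1/2}$, so that $F_{1/2}(A,B)=A\natural B$ and $F_0(A,B)=A$, the target is the log-majorization
\[
\lambda(F_t(A,B))\succ_{\log}\lambda(A^{1-t}B^t), \qquad 0\le t\le 1.
\]
Substituting $A\mapsto A^{-2}$, $B\mapsto B^2$, $t=p/2$ turns this into $\lambda(A^{-1}|AB|^p A^{-1})\succ_{\log}\lambda(A^{p-2}B^p)$, and invoking ({\bf P2}) together with pre- and post-multiplication by $A$ yields (\ref{e43}). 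The target specializes at $t=1/2$ to Lemma~\ref{lem1}. It also holds at $t=1$: factoring $F_1(A,B)=Z^*Z$ with $Z=A^{-1/2}B^{1/2}A^{1/2}$, and noting that $Z$ is similar to $B^{1/2}$, Weyl's majorization gives $\lambda(F_1)=\sigma(Z)^2\succ_{\log}\lambda(Z)^2=\lambda(B)$ (this is essentially the content of Theorem~\ref{thm4}).

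By the antisymmetric tensor power reduction, establishing the log-majorization amounts to the spectral-norm inequality $\|F_t(A,B)\|\ge\lambda_1(A^{1-t}B^t)$. The natural attempt, mirroring Lemma~\ref{lem1}, is to apply the Schur-complement inequality (\ref{e32}) with $X=A^{1/2}B^{1/2}$ and $H=A\sharp_{1-t}B=B\sharp_t A$. Using the identity $(B\sharp_t A)^{-1}=B^{-1/2}(B^{1/2}A^{-1}B^{1/2})^t B^{-1/2}$, one checks $XH^{-1}X^*=F_t(A,B)$, which recovers the construction in the proof of Lemma~\ref{lem1} at $t=1/2$. Then (\ref{e32}) gives
\[
\lambda_1(AB)=\|A^{1/2}B^{1/2}\|^2\le \|A\sharp_{1-t}B\|\cdot\|F_t(A,B)\|,
\]
reducing the problem to proving the companion bound $\lambda_1(AB)\ge\|A\sharp_{1-t}B\|\cdot\lambda_1(A^{1-t}B^t)$.

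The hard step will be the companion bound. At $t=1/2$ it follows by combining $\|A\sharp B\|\le\lambda_1(A^{1/2}B^{1/2})$ from (\ref{e22a}) with Weyl's inequality $\lambda_1(A^{1/2}B^{1/2})\le \sigma_1(A^{1/2}B^{1/2})=\sqrt{\lambda_1(AB)}$. For general $t$, however, (\ref{e22a}) only gives $\|A\sharp_{1-t}B\|\le\lambda_1(A^t B^{1-t})$, reducing the required bound to $\lambda_1(AB)\ge\lambda_1(A^t B^{1-t})\lambda_1(A^{1-t}B^t)$; this factored inequality can fail in general (already at $t=1/4$, small numerical experiments with $A=\diag(1,4)$ and a rotated copy of $A$ as $B$ produce counterexamples). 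Closing the gap thus requires either a genuinely sharper bound on $\|A\sharp_{1-t}B\|$ that respects its interaction with $A^{1-t}B^t$---perhaps via the Ando--Hiai or Araki--Lieb--Thirring log-majorizations---or an interpolation argument bridging the confirmed cases $t\in\{0,1/2,1\}$, possibly via the integral representation $(B^{1/2}A^{-1}B^{1/2})^t=\frac{\sin(t\pi)}{\pi}\int_0^\infty \lambda^{t-1}(B^{1/2}A^{-1}B^{1/2})(B^{1/2}A^{-1}B^{1/2}+\lambda I)^{-1}\,d\lambda$ to transfer estimates between these endpoints.
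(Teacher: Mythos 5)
The statement you are addressing is presented in the paper as an open conjecture: the author gives no proof and only remarks that the weighted quantity $A\natural_t B$ (your $F_t(A,B)$) ``should be closely related to the previous conjecture and may deserve further investigation.'' So there is no proof of the author's to compare yours against; the only question is whether your proposal settles the problem, and by your own account it does not. The correct parts of your plan are genuinely correct: the substitution $A\mapsto A^{-2}$, $B\mapsto B^2$, $t=p/2$ does convert the target log-majorization $\lambda(A\natural_t B)\succ_{\log}\lambda(A^{1-t}B^t)$ into (\ref{e43}) via ({\bf P2}), the endpoint cases $t\in\{0,\tfrac12,1\}$ are verified soundly (the middle one is Lemma \ref{lem1}, the last one is the Weyl argument of Theorem \ref{thm4}), and the identity $A^{1/2}B^{1/2}(A\sharp_{1-t}B)^{-1}B^{1/2}A^{1/2}=A\natural_t B$ is a nice generalization of the factorization used in Lemma \ref{lem1}.

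Nevertheless the proposal contains a genuine gap and is not a proof. Everything hinges on the companion bound $\lambda_1(AB)\ge\|A\sharp_{1-t}B\|\,\lambda_1(A^{1-t}B^t)$, which you do not establish; worse, you show that feeding in the estimate $\|A\sharp_{1-t}B\|\le\lambda_1(A^{t}B^{1-t})$ from (\ref{e22a}) reduces it to $\lambda_1(A^tB^{1-t})\,\lambda_1(A^{1-t}B^t)\le\lambda_1(AB)$, which you report fails numerically for intermediate $t$. The coincidence that rescues $t=\tfrac12$ --- both factors equal the same number $\lambda_1(A^{1/2}B^{1/2})$, whose square is dominated by $\lambda_1(AB)$ by Weyl's inequality --- has no analogue for other values of $t$, so the Schur-complement route as you have set it up dead-ends rather than merely needing a sharper constant. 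The closing suggestions (Ando--Hiai or Araki--Lieb--Thirring log-majorizations, interpolation between the confirmed endpoints via the integral representation) are directions for further work, not arguments. What you actually have is a correct reduction of the conjecture to the single spectral-norm inequality $\|A\natural_t B\|\ge\lambda_1(A^{1-t}B^t)$ (after the antisymmetric-power reduction), together with evidence that the most natural attack on that inequality fails; the conjecture remains open after your attempt, exactly as it is in the paper.
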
 
 For any  $n\times n$ positive definite matrices $A$ and $B$, in Section \ref{s3} we defined the quantity $A\natural B$. A weighted version seems to be $$A \natural_t B:=A^{1/2}(B^{1/2}A^{-1}B^{1/2})^{t}A^{1/2}, \quad 0\le t\le 1.$$ This quantity should be closely related to the previous conjecture and may deserve further investigation.

 Theorem \ref{thm1} and Theorem \ref{thm2} immediately lead to  \begin{eqnarray*} \det(A^2 + |AB|)\ge \det(A^2+|BA|),
 \end{eqnarray*}  a situation not covered by (\ref{e42}). We may ask whether it is true
 \begin{eqnarray*} \det(A^2 + |AB|^p)\ge \det(A^2+|BA|^p)  \quad \hbox{for all} ~ p>0?
 \end{eqnarray*} 
 Indeed, some simulations suggest a corresponding majorization relation is true. That is,
 \begin{conj}
 	Let $A$ and $B$ be $n\times n$ positive semidefinite matrices. Then  \begin{eqnarray*} \lambda(A^2+|BA|^p)\succ\lambda(A^2 + |AB|^p) \quad \textnormal{for all} ~ p>0.
 	\end{eqnarray*}   \end{conj}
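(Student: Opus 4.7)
The plan is to extend the $p=2k$ result of \cite[Corollary 2.3]{LW12} to all real $p>0$ via a block matrix reformulation. As in the rest of the paper, by a perturbation argument one may assume $A,B$ are positive definite. Since $AB$ and $BA$ have the same singular values, $|AB|^p$ and $|BA|^p$ are isospectral, whence $\tr(A^2+|BA|^p)=\tr(A^2+|AB|^p)$ and the conjecture reduces to the weak majorization $\lambda(A^2+|BA|^p)\succ_w\lambda(A^2+|AB|^p)$.

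The key reformulation uses the polar decomposition. Writing $AB=U|AB|$ with $U$ unitary, one has $|BA|^p=U|AB|^pU^{*}$. Set
$$X:=\begin{pmatrix}A\\[2pt] |AB|^{p/2}U^{*}\end{pmatrix}\in\mathbb{C}^{2n\times n}.$$
A direct calculation gives $X^{*}X=A^2+|BA|^p$, while the two diagonal $n\times n$ blocks of $XX^{*}$ are $A^2$ and $|AB|^p$, summing to $A^2+|AB|^p$. At $p=2$ the lower block of $X$ is $|AB|U^{*}=BA$, so $X=\binom{A}{BA}$ and the comparison $\lambda(X^{*}X)\succ\lambda((XX^{*})_{11}+(XX^{*})_{22})$ is a reformulation of the $k=1$ case of \cite[Corollary 2.3]{LW12}. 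The conjecture asserts the same Ky Fan comparison for the more intricate $X$ defined above.

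For non-integer $p$ I would attempt the integral representation
$$t^{p/2}=\frac{\sin(p\pi/2)}{\pi}\int_{0}^{\infty}\frac{t}{t+s}\,s^{p/2-1}\,ds,\qquad 0<p<2,$$
applied to $t=|BA|^2=AB^2A$ and $t=|AB|^2=BA^2B$, which writes $A^2+|BA|^p$ and $A^2+|AB|^p$ as $A^2$ plus a positive integral in $s$ of resolvent-twisted versions of the $p=2$ matrices. The range $p\ge 2$ could then be reduced to $0<p\le 2$ by writing $p=2m+r$ with $0\le r<2$ and using the even integer case. The main obstacle is the integration step: weak majorization is not preserved under integration of operator-valued kernels in general, and the convexity of the Ky Fan $k$-norms yields only the wrong direction of the triangle inequality. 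A genuine structural refinement—essentially a pointwise-in-$s$ strengthening of the Lin--Wang argument, or an analogue of Lemma~\ref{lem1} tailored to the pair $(A^2+|BA|^p,A^2+|AB|^p)$—appears necessary, and this is where I expect the hard work to lie.
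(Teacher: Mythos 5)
You should first note that this statement is one of the paper's open problems: the author records it as a conjecture supported only by ``some simulations'' and supplies no proof, so there is nothing in the paper to measure your argument against. Your own attempt does not supply one either. The algebraic reformulation is correct: writing $AB=U|AB|$ gives $|BA|^p=U|AB|^pU^*$, so for $X=\begin{pmatrix}A\\ |AB|^{p/2}U^*\end{pmatrix}$ one indeed has $X^*X=A^2+|BA|^p$ while the diagonal blocks of $XX^*$ are $A^2$ and $|AB|^p$; and since $|AB|$ and $|BA|$ are isospectral the traces agree, so $\succ$ and $\succ_w$ are equivalent here. But this reformulation carries no content on its own. For a general tall matrix $X=\begin{pmatrix}X_1\\ X_2\end{pmatrix}$ the comparison $\lambda(X_1^*X_1+X_2^*X_2)\succ\lambda(X_1X_1^*+X_2X_2^*)$ is simply false: if it held for all pairs, applying it to $(X_1^*,X_2^*)$ would give the reverse majorization and force the two spectra to coincide, which already fails for $X_1=A$, $X_2=BA$ with $A=\mathrm{diag}(1,0)$ and $B$ the all-ones matrix (spectra $(3,0)$ versus $((3+\sqrt5)/2,(3-\sqrt5)/2)$). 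Whatever structural feature of the pair $(A,(BA)^k)$ drives the Lin--Wolkowicz corollary must therefore be re-established for your particular $X$, and that step is exactly the content of the conjecture; you have restated the problem, not reduced it.

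The two devices you propose for closing this gap fail for the reasons you yourself flag. The integral representation of $t^{p/2}$ expresses $|BA|^p$ and $|AB|^p$ as integrals of congruent families, but Ky Fan partial sums are subadditive, which bounds $\sum_{i\le k}\lambda_i^{\downarrow}$ of an integral from above precisely when you need a lower bound, so pointwise-in-$s$ majorizations cannot be integrated; and the splitting $p=2m+r$ does not factor $|BA|^p$ in any way compatible with the block construction. Your closing sentence concedes that a genuinely new structural lemma is needed, and that concession is accurate: as written, the proposal is a correct reformulation together with an honestly identified missing step, not a proof, and the statement remains open.
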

 	

\end{document}